\newtheorem{theorem}{Theorem}%[section]
\newtheorem{observation}[theorem]{Observation}
\newtheorem{conjecture}[theorem]{Conjecture}
\newtheorem{question}[theorem]{Question}
\newtheorem{problem}[theorem]{Problem}
\begin{document}

\tikzset{middlearrow/.style={
        decoration={markings,
            mark= at position 0.5 with {\arrow{#1}} ,
        },
        postaction={decorate}
    }
}

\tikzset{My Style/.style={draw, circle, fill=red, scale=0.4}}

\title{On maximum Wiener index of directed grids}
%\title{Orientations of the grid with the maximum Wiener index}
\author{Martin Knor$^{1}$,
Riste \v{S}krekovski$^{2,3}$ \\[0.3cm] {\small $^{1}$ \textit{Slovak University of Technology in Bratislava,
Bratislava, Slovakia}} \\[0.1cm] {\small $^{2}$ \textit{University of Ljubljana, FMF, 1000 Ljubljana,
Slovenia }}\\[0.1cm] {\small $^{3}$ \textit{Faculty of Information Studies, 8000 Novo
Mesto, Slovenia }}\\[0.1cm] }

\maketitle

\begin{abstract} This paper is devoted to Wiener index of directed graphs,
more precisely of directed grids.
The grid $G_{m,n}$ is  the Cartesian product $P_m\Box P_n$ of paths on $m$ and $n$ vertices,
and in a particular case when $m=2$, it is a called the ladder graph $L_n$.
Kraner \v{S}umenjak et al.~\cite{ladder21} proved that the maximum Wiener
index of a digraph, which is obtained by orienting the edges of $L_n$, is
obtained when all layers isomorphic to one factor are directed paths directed in the
same way except one (corresponding to an endvertex of the other
factor) which is a directed path directed in the opposite way.
Then they conjectured that the natural generalization of this orientation to $G_{m,n}$
will attain the maximum Wiener index among all orientations of $G_{m,n}$.
In this paper we disprove the conjecture by showing that a comb-like orientation of 
$G_{m,n}$ has significiantly bigger Wiener index. 
\end{abstract}

%
%
% ------------------------------------------------------------------------
%
\section{Introduction}

Let $G$ be a graph.
Its Wiener index, $W(G)$, is  the sum of distances between all pairs of
vertices of $G$.
Thus,
$$
W(G)=\sum_{\{u,v\}\subseteq V(G)} d(u,v).
$$
Wiener index was introduced by Wiener~\cite{Wiener} in 1947 for its
correlation with the boiling point of alkanes, and afterwards it became
popular among chemists.
By graph theorists it has been considered later under various names, see
\cite{harary,entringer,soltes}.
More about this invariant can be found
in~\cite{surv1,petra,KS_chapter,mathasp,Gut-sur}.
Wiener index is also tightly related to the average distance,
for which $\mu(G) = W(G)/ \binom{n}{2}$, see \cite{dankelmann,doyle},
and also \cite{goddard} for a brief survey.

\paragraph{Wiener index of directed graphs.}
Let $D$ be a directed graph (a digraph).
A (directed) path in $D$ is a sequence of vertices
$v_0,v_1,\ldots,v_t$ such that $v_{i-1}v_i$ is an arc of $D$, where
$1\le i\le t$.
The distance $d_D(u,v)$ is the length of a shortest path from $u$ to $v$,
and if there is no such path, we set
\begin{equation}
\label{zero}
d_D(u,v)=0.
\end{equation}
Denote $w_D(u)=\sum_{v\in V(D)}d_D(u,v)$.
Wiener index of $D$, $W(D)$, is the sum of all distances in $D$,
where each ordered pair of vertices has to be taken into account.
Hence,
$$
W(D)=\sum_{(u,v)\in V(D)\times V(D)}d_D(u,v)=\sum_{u\in V(D)}w_D(u).
$$

The study of Wiener index of digraphs was initiated by Harary
\cite{harary}, who applied it to sociometric problems.
Strict lower bound for the Wiener index of digraphs was found by
Ng and Teh \cite{ng}.
Wiener index of digraphs was considered also indirectly, through the
study of the average distance, see \cite{dank,doyle1}.

\medskip
For a graph $G$, let $W_{\max}(G)$ and $W_{\min}(G)$ be the maximum and
the minimum, respectively, Wiener index among all digraphs
obtained by orienting the edges of $G$.
The following problem was posed in \cite{KST1}.

\begin{problem}
\label{prob}
For a graph $G$, find $W_{\max}(G)$ and $W_{\min}(G)$.
\end{problem}

Let $K_n$ be the complete graph on $n$ vertices.
In \cite{moon,plesnik} Plesn{\'\i}k and Moon solved Problem~{\ref{prob}}
for $W_{\max}(K_n)$ under an additional assumption
that the extremal graph is strongly connected.
In \cite{KST1} it was shown that the results of Plesn{\'\i}k and Moon
hold also without the additional assumption (i.e., assuming the condition
(\ref{zero})).
One may expect that when $G$ is 2-connected, $W_{\max}(G)$ is attained
for some strongly connected orientation.
This was disproved in \cite{KST1} using some $\Theta$-graphs
$\Theta_{a,b,1}$.
More about this topic can be found in~\cite{dank, KST1, KST2, KST3}.

Let $P_n$ be a directed path on $n$ vertices.
Then $W(P_n)=\binom{n+1}{3}=\frac 16 n^3+O(n^2)$.
Now suppose that $G$ is a graph on $n$ vertices which has a Hamiltonian path
$H$.
Direct all edges of $H$ in one direction and direct remaining edges of $G$
in the opposite way.
Let $D_H$ be the resulting directed graph.
Then the orientation of $H$ is a directed path $P$ and if $d_P(u,v)>0$ then
$d_P(u,v)=d_{D_H}(u,v)$ since the arcs obtained by directing edges not in
$H$ cannot be used as ``shortcuts".
Consequently, $W_{\max}(G)>W(P_n)=\frac 16 n^3+O(n^2)$.
This gives a simple lower bound for $W_{\max}(G)$ if $G$ has a Hamiltonian
path.

\paragraph{Wiener index of directed grids.}
In this paper we consider Wiener index of directed grids.
The $m\times n$ grid $G_{m,n}$ is the Cartesian product $P_m\Box P_n$
of paths on $m$ and $n$ vertices.
If $m=2$, the grid is called the ladder graph $L_n$.
Kraner {\v S}umenjak et al.~\cite{ladder21} proved that the maximum Wiener
index of a digraph whose underlying graph is $L_n$ is  $(8n^3+3n^2-5n+6)/3$.
Moreover, the optimal orientation of $L_n$ is attained
for orientation presented in Figure~{\ref{ladder}}.

%-----slika ladder--------

\begin{figure}[ht!]
\begin{center}
\begin{tikzpicture}[scale=1.2,style=thick]

%%% Define Vertices

\node [My Style, name=u0]   at (0,0) {};
\node [My Style, name=u1]   at (1,0) {};
\node [My Style, name=u2]   at (2,0) {};
\node [My Style, name=u3]   at (3,0) {};
\node [My Style, name=u4]   at (4,0) {};
\node [My Style, name=u5]   at (5,0) {};

\node [My Style, name=v0]   at (0,1) {};
\node [My Style, name=v1]   at (1,1) {};
\node [My Style, name=v2]   at (2,1) {};
\node [My Style, name=v3]   at (3,1) {};
\node [My Style, name=v4]   at (4,1) {};
\node [My Style, name=v5]   at (5,1) {};

%%% Draw edges
\draw[middlearrow={>}] (u5) -- (u4); \draw[middlearrow={>}] (u4) -> (u3); \draw[middlearrow={>}] (u3) -> (u2); \draw[middlearrow={>}] (u2) -> (u1);\draw[middlearrow={>}] (u1) -> (u0);

\draw[middlearrow={>}] (v0) -- (v1); \draw[middlearrow={>}] (v1) -> (v2); \draw[middlearrow={>}] (v2) -> (v3); \draw[middlearrow={>}] (v3) -> (v4);\draw[middlearrow={>}] (v4) -> (v5);

\draw[middlearrow={>}] (v0) -- (u0); 

\draw[middlearrow={>}] (v1) -- (u1);
\draw[middlearrow={>}] (v2) -- (u2);
\draw[middlearrow={>}] (v3) -- (u3);
\draw[middlearrow={>}] (v4) -- (u4);
\draw[middlearrow={>}] (u5) -- (v5);

\end{tikzpicture}
\end{center}
\caption{An orientation of the ladder $P_2\Box P_6$ with the maximum Wiener index.}

\label{ladder}
\end{figure}
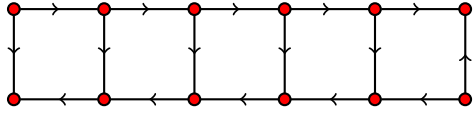

Let ${D}_{m,n}$ be the orientation of $G_{m,n}$ with all
$P_m$-layers oriented up except the last $P_m$-layer which is oriented down,
and all $P_n$-layers oriented to the left except the first $P_n$-layer which
is oriented to the right, see Figure \ref{mesh2}.
The following conjecture was stated in \cite{ladder21}.

\begin{conjecture}
\label{tadeja}
For every $m,n\geq 2$, we have $W_{\rm max}(G_{m,n})=W({D}_{m,n})$.
\end{conjecture}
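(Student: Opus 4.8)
Since the abstract announces that Conjecture~\ref{tadeja} is \emph{false}, the real task is to disprove it, i.e.\ to exhibit an orientation of $G_{m,n}$ whose Wiener index strictly exceeds $W(D_{m,n})$. My plan is to compare two quantities of different asymptotic order: an \emph{upper} bound for $W(D_{m,n})$ and a \emph{lower} bound for a comb-like (serpentine) orientation $C_{m,n}$. For the first, observe that $D_{m,n}$ is essentially monotone: the row coordinate can increase only along the up-columns and decrease only in the single down-column, while the column coordinate increases only in the first row and decreases only in the rows above it. One checks that after leaving the first row a directed path makes essentially monotone progress, so no directed path can usefully oscillate and $\operatorname{diam}(D_{m,n})=O(m+n)$. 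As there are fewer than $(mn)^2$ ordered pairs and each distance is $O(m+n)$, this yields $W(D_{m,n})=O\bigl(m^2n^2(m+n)\bigr)$. (If one wants the exact threshold in $m,n$ instead of just ``all large $m,n$'', one computes $W(D_{m,n})$ exactly by the same case analysis over the relative positions of $u$ and $v$ that was carried out for $L_n=G_{2,n}$; this is heavier but elementary.)

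Next I would construct $C_{m,n}$ from the serpentine (boustrophedon) Hamiltonian path $P$ of $G_{m,n}$: traverse column $1$ upward, cross the top edge, traverse column $2$ downward, cross the bottom edge, traverse column $3$ upward, and so on. This path uses exactly all vertical edges together with the alternating top/bottom ``turn'' edges, so \emph{every} remaining edge is horizontal. I orient each remaining horizontal edge from its \emph{later} endpoint on $P$ to its \emph{earlier} endpoint (``backward''). Drawn in the plane the columns play the role of teeth and $P$ the role of a spine, whence the name comb-like.

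The heart of the argument is that these backward chords never create a shortcut in the forward direction. Label each vertex $x$ by its position $\pi(x)\in\{1,\dots,mn\}$ along $P$; then every path arc increases $\pi$ by exactly $1$, whereas every backward chord strictly decreases $\pi$. Hence any directed walk from $u$ to $v$ makes net $\pi$-progress $\pi(v)-\pi(u)$ using steps that raise $\pi$ by at most $1$, so it has length at least $\pi(v)-\pi(u)$ whenever $\pi(u)<\pi(v)$, and $P$ attains this. Therefore $d_{C_{m,n}}(u,v)=\pi(v)-\pi(u)$ for every forward pair, exactly as on a directed path, and since the (reachable) backward pairs contribute nonnegatively,
\[
W(C_{m,n}) \ \ge\ \sum_{\pi(u)<\pi(v)} \bigl(\pi(v)-\pi(u)\bigr) \ =\ \binom{mn+1}{3} \ =\ \tfrac16(mn)^3 + O\bigl((mn)^2\bigr).
\]
Comparing the two bounds, $\tfrac16(mn)^3$ dominates $O\bigl(m^2n^2(m+n)\bigr)$ as soon as $mn/(m+n)$ is large, so $W(C_{m,n})>W(D_{m,n})$ for all sufficiently large $m,n$, contradicting the conjecture. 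Consistently with the proven ladder case, at $m=2$ one has $\tfrac16(2n)^3=\tfrac43 n^3<\tfrac83 n^3=W(D_{2,n})$, so the comb can win only for larger $m$.

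The step I expect to be the main obstacle is turning this clean asymptotic victory into the sharp statement one ultimately wants. The $\pi$-monotonicity argument controls all forward distances for free, which is exactly what makes the lower bound painless; but pinning down the precise crossover -- and hence the full set of pairs $(m,n)$ for which Conjecture~\ref{tadeja} fails -- requires the exact value of $W(D_{m,n})$ together with a sharp evaluation of $W(C_{m,n})$ that also accounts for the backward pairs, rather than the crude $\Omega\bigl((mn)^3\bigr)$ estimate above.
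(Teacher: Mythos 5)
You correctly read the statement as something to be refuted, and your strategy is the same high-level idea as the paper's: a comb/serpentine orientation has Wiener index of order $(mn)^3$, while $D_{m,n}$ only reaches order $m^2n^2(m+n)$, so the former must win for large $m,n$. The difference is in execution. Your lower bound is rigorous and clean: the $\pi$-monotonicity argument (path arcs raise $\pi$ by exactly $1$, every chord lowers it) is precisely the paper's introductory remark that backward chords cannot shortcut a Hamiltonian path, and it yields $W\ge\binom{mn+1}{3}$ without any case analysis. Your upper bound for $W(D_{m,n})$ is true, but the stated justification is slightly off: directed paths in $D_{m,n}$ \emph{can} oscillate (the digraph is strongly connected and contains long closed walks through row $1$ and column $n$), so ``monotone progress'' does not by itself bound distances; what you actually need is the \emph{existence} of a short route --- climb the current column to row $1$, go right to the corner $(1,n)$, descend column $n$, then go left along the target row (with a one-step detour when the target lies in row $1$) --- which gives $\operatorname{diam}(D_{m,n})\le 2(m+n)$ and hence $W(D_{m,n})\le 2(m+n)(mn)^2$. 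With that repaired, $\binom{mn+1}{3}>2(m+n)(mn)^2$ as soon as $mn$ exceeds roughly $12(m+n)$, so the conjecture fails for all sufficiently large $m,n$; since the conjecture is universally quantified, this is a complete refutation. What the paper buys with its much heavier exact computations (Theorems~\ref{thm:our_formula}--\ref{thm:comparing}) is the failure of the conjecture for \emph{every} $m\ge 3$ and even $n\ge 4$, together with the secondary comparison of $W(C_{m,n})$ and $W(C_{n,m})$; note also that the paper's $C_{m,n}$ is built on a Hamiltonian \emph{cycle} (whence the parity restriction on $n$) rather than on a Hamiltonian path as yours is, although both constructions have the same leading term $\tfrac16(mn)^3$, and your final sanity check at $m=2$ against the known ladder value is consistent with both.
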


The conjecture naturally generalizes the result for $m=2$, but in this paper
we show that it is not true if $m\ge 3$.
Let ${C}_{m,n}$ be an orientation of $G_{m,n}$ in
which the top $P_n$-layer is directed to the right and this layer is
completed to a directed Hamiltonian cycle $C$ in a zig-zag way as shown by blue
arrows on Figure~\ref{protimesh}.
Moreover, the other edges are directed in such a way that they do not
shorten directed blue path starting at vertex $(1,1)$.
Of course, ${C}_{m,n}$ exists only if $n$ is even.
We show that if $n$ is even $n\ge 4$ and $m\ge 3$, then
$W({C}_{m,n})>W({D}_{m,n})$.
To do this, we calculate $W({C}_{m,n})$ and $W({D}_{m,n})$.

%
%
% ------------------------------------------------------------------------
%

\section{Wiener indices of ${C}_{m,n}$ and ${D}_{m,n}$}

%-----slika mesh---------
%\hspace{0.5cm}
%\input{Figures/mesh_Knor.tex}

%-----slika 5x8 knor---------
\hspace{0.5cm}

\begin{figure}[ht!]
\begin{center}
\begin{tikzpicture}[scale=1.2,style=thick]

%%% Define Vertices

\node [My Style, name=11]   at (1,1) {};
\node [My Style, name=12]   at (1,2) {};
\node [My Style, name=13]   at (1,3) {};
\node [My Style, name=14]   at (1,4) {};
\node [My Style, name=15]   at (1,5) {};

\node [My Style, name=21]   at (2,1) {};
\node [My Style, name=22]   at (2,2) {};
\node [My Style, name=23]   at (2,3) {};
\node [My Style, name=24]   at (2,4) {};
\node [My Style, name=25]   at (2,5) {};

\node [My Style, name=31]   at (3,1) {};
\node [My Style, name=32]   at (3,2) {};
\node [My Style, name=33]   at (3,3) {};
\node [My Style, name=34]   at (3,4) {};
\node [My Style, name=35]   at (3,5) {};

\node [My Style, name=41]   at (4,1) {};
\node [My Style, name=42]   at (4,2) {};
\node [My Style, name=43]   at (4,3) {};
\node [My Style, name=44]   at (4,4) {};
\node [My Style, name=45]   at (4,5) {};

\node [My Style, name=51]   at (5,1) {};
\node [My Style, name=52]   at (5,2) {};
\node [My Style, name=53]   at (5,3) {};
\node [My Style, name=54]   at (5,4) {};
\node [My Style, name=55]   at (5,5) {};

\node [My Style, name=61]   at (6,1) {};
\node [My Style, name=62]   at (6,2) {};
\node [My Style, name=63]   at (6,3) {};
\node [My Style, name=64]   at (6,4) {};
\node [My Style, name=65]   at (6,5) {};

\node [My Style, name=71]   at (7,1) {};
\node [My Style, name=72]   at (7,2) {};
\node [My Style, name=73]   at (7,3) {};
\node [My Style, name=74]   at (7,4) {};
\node [My Style, name=75]   at (7,5) {};

\node [My Style, name=81]   at (8,1) {};
\node [My Style, name=82]   at (8,2) {};
\node [My Style, name=83]   at (8,3) {};
\node [My Style, name=84]   at (8,4) {};
\node [My Style, name=85]   at (8,5) {};

%%% Draw edges
%hamiltonski
\draw[middlearrow={>},blue,line width=0.5mm] (11) -- (12); 
\draw[middlearrow={>},blue,line width=0.5mm] (12) -- (13);  
\draw[middlearrow={>},blue,line width=0.5mm] (13) -- (14); 
\draw[middlearrow={>},blue,line width=0.5mm] (14) -> (15);
\draw[middlearrow={>},blue,line width=0.5mm] (15) -> (25);
\draw[middlearrow={>},blue,line width=0.5mm] (25) -> (35);
\draw[middlearrow={>},blue,line width=0.5mm] (35) -> (45);
\draw[middlearrow={>},blue,line width=0.5mm] (45) -> (55);
\draw[middlearrow={>},blue,line width=0.5mm] (55) -> (65);
\draw[middlearrow={>},blue,line width=0.5mm] (65) -- (75); 
\draw[middlearrow={>},blue,line width=0.5mm] (75) -- (85);  
\draw[middlearrow={>},blue,line width=0.5mm] (85) -- (84); 
\draw[middlearrow={>},blue,line width=0.5mm] (84) -> (83);
\draw[middlearrow={>},blue,line width=0.5mm] (83) -> (82);
\draw[middlearrow={>},blue,line width=0.5mm] (82) -> (81);
\draw[middlearrow={>},blue,line width=0.5mm] (81) -> (71);
\draw[middlearrow={>},blue,line width=0.5mm] (71) -> (72);
\draw[middlearrow={>},blue,line width=0.5mm] (72) -> (73);
\draw[middlearrow={>},blue,line width=0.5mm] (73) -- (74); 
\draw[middlearrow={>},blue,line width=0.5mm] (74) -- (64);  
\draw[middlearrow={>},blue,line width=0.5mm] (64) -- (63); 
\draw[middlearrow={>},blue,line width=0.5mm] (63) -> (62);
\draw[middlearrow={>},blue,line width=0.5mm] (62) -> (61);
\draw[middlearrow={>},blue,line width=0.5mm] (61) -> (51);
\draw[middlearrow={>},blue,line width=0.5mm] (51) -> (52);
\draw[middlearrow={>},blue,line width=0.5mm] (52) -> (53);
\draw[middlearrow={>},blue,line width=0.5mm] (53) -> (54);
\draw[middlearrow={>},blue,line width=0.5mm] (54) -- (44); 
\draw[middlearrow={>},blue,line width=0.5mm] (44) -- (43);  
\draw[middlearrow={>},blue,line width=0.5mm] (43) -- (42); 
\draw[middlearrow={>},blue,line width=0.5mm] (42) -> (41);
\draw[middlearrow={>},blue,line width=0.5mm] (41) -> (31);
\draw[middlearrow={>},blue,line width=0.5mm] (31) -> (32);
\draw[middlearrow={>},blue,line width=0.5mm] (32) -> (33);
\draw[middlearrow={>},blue,line width=0.5mm] (33) -> (34);
\draw[middlearrow={>},blue,line width=0.5mm] (34) -> (24);
\draw[middlearrow={>},blue,line width=0.5mm] (24) -> (23);
\draw[middlearrow={>},blue,line width=0.5mm] (23) -> (22);
\draw[middlearrow={>},blue,line width=0.5mm] (22) -> (21);
\draw[middlearrow={>},blue,line width=0.5mm] (21) -- (11);

 %ostale
\draw[middlearrow={>}] (12) -- (22);  
\draw[middlearrow={>}] (13) -- (23); 
\draw[middlearrow={>}] (14) -> (24);

\draw[middlearrow={>}] (21) -> (31);
\draw[middlearrow={>}] (22) -> (32);
\draw[middlearrow={>}] (23) -> (33);

\draw[middlearrow={>}] (32) -> (42);
\draw[middlearrow={>}] (33) -> (43);
\draw[middlearrow={>}] (34) -> (44);

\draw[middlearrow={>}] (41) -> (51);
\draw[middlearrow={>}] (42) -> (52);
\draw[middlearrow={>}] (43) -> (53);

\draw[middlearrow={>}] (52) -> (62);
\draw[middlearrow={>}] (53) -> (63);
\draw[middlearrow={>}] (54) -> (64);

\draw[middlearrow={>}] (61) -> (71);
\draw[middlearrow={>}] (62) -> (72);
\draw[middlearrow={>}] (63) -> (73);

\draw[middlearrow={>}] (72) -> (82);
\draw[middlearrow={>}] (73) -> (83);
\draw[middlearrow={>}] (74) -> (84);

\draw[middlearrow={>}] (24) -> (25);
\draw[middlearrow={>}] (34) -> (35);
\draw[middlearrow={>}] (44) -> (45);
\draw[middlearrow={>}] (54) -> (55);
\draw[middlearrow={>}] (64) -> (65);
\draw[middlearrow={>}] (74) -> (75);

%%% Label vertices
\draw (1,0.5) node {${(5,1)}$};
\draw (2,0.5) node {${(5,2)}$};
\draw (8,0.5) node {${(5,8)}$};
\draw (1,5.5) node {${(1,1)}$};
\draw (2,5.5) node {${(1,2)}$};
\draw (8,5.5) node {${(1,8)}$};

\end{tikzpicture}
\end{center}
\caption{A comb orientation of the grid $G_{5,8}$.}

\label{protimesh}
\end{figure}
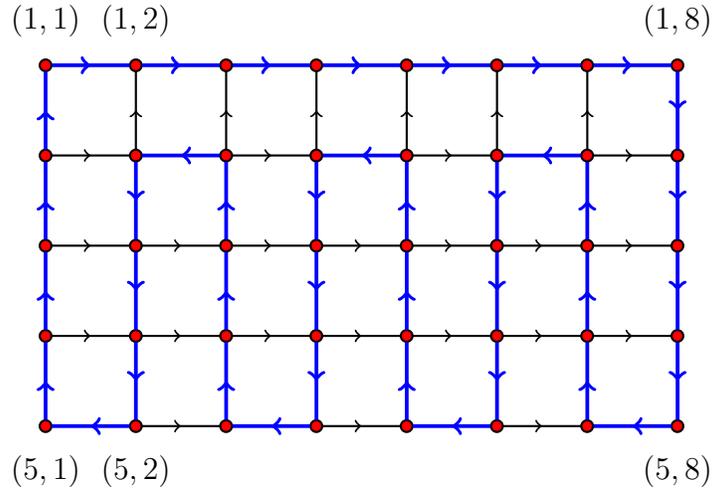

We start with calculating the Wiener index of a comb-like orientation of a
grid.

\begin{theorem}
\label{thm:our_formula}
Let $n$ be even and $m,n\ge 4$.
Then
\begin{align}
W({C}_{m,n})=
\tfrac 1{12}(&2m^3n^3+2m^3n^2+2m^3n+4m^3+4m^2n^3-3m^2n^2-m^2n-6m^2\nonumber\\
&-2mn^3+4mn^2-2mn-16m+24n^2-72n+72+\beta)\nonumber
\end{align}
where $\beta=3n-6$ if $m$ is odd and $\beta=0$ if $m$ is even.
\end{theorem}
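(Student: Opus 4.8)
The plan is to compute $W(C_{m,n})$ by first pinning down the exact distance function and then summing it over all ordered pairs. I fix coordinates $(i,j)$ with $1\le i\le m$ (rows, top row $i=1$) and $1\le j\le n$ (columns), and read off from Figure~\ref{protimesh} the three ingredients of $C_{m,n}$: the Hamiltonian ``snake'' cycle $C$ (right along the spine $i=1$, down column $n$, then boustrophedon through the teeth in columns $n-1,\dots,1$); the leftward \emph{connector} arcs, which alternate between the bottom row $i=m$ (between an odd column and the next even column) and row $i=2$ (between an even column and the next odd column); and the chord arcs, namely the up-chords $(2,j)\to(1,j)$ onto the spine and the rightward chords $(i,j)\to(i,j+1)$ present at every interior row $3\le i\le m-1$ (so each such row is a full left-to-right highway) and at the non-connector positions of rows $2$ and $m$. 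Writing $p(i,j)$ for the position of a vertex along $C$ starting from $p(1,1)=0$, the single structural fact driving everything is that \emph{every chord strictly decreases $p$}; this is precisely the hypothesis that the chords do not shorten the directed path from $(1,1)$.

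From this position-monotonicity I first dispose of the ``forward'' pairs. If $p(u)\le p(v)$ then no chord can help and $d(u,v)=p(v)-p(u)$, so the forward pairs contribute $\sum_{0\le a<b\le mn-1}(b-a)=\binom{mn+1}{3}$, which already supplies the leading $\tfrac16 m^3n^3$ of the claimed formula. It remains to evaluate $W_{\mathrm{bwd}}=\sum_{p(u)>p(v)}d(u,v)$.

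For a backward pair the shortest path must decrease $p$, hence it is built from rightward-highway steps, tooth steps, connector steps and up-chords. The core of the proof is to show that a geodesic can always be taken in a canonical normal form: travel within the comb toward the target's column using the interior-row highways and the alternating connectors (which flip the active tooth direction), then settle onto the target, using an up-chord precisely when $v$ lies on the spine. This yields an explicit piecewise formula for $d\big((i_1,j_1),(i_2,j_2)\big)$ whose cases are governed by the parities of $j_1,j_2$ (the tooth orientations), by whether $i_2=1$, and by the comparison of the rows; optimality then follows from position-monotonicity together with a parity argument counting the forced connector traversals. The only place where the parity of $m$ enters qualitatively is the bottom boundary row $i=m$, whose interaction with the tooth endpoints and connectors differs according to $m\bmod 2$; this is the sole origin of the correction term $\beta$.

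Finally I would sum the distance formula over all backward pairs, organizing the sum by the case partition and summing first over rows and then over columns (equivalently, grouping the vertices into spine, interior-row, and boundary-row classes). Adding $\binom{mn+1}{3}$ and simplifying collapses everything to a single cubic polynomial in $m$ and $n$, with the odd/even-$m$ split producing $\beta=3n-6$ versus $\beta=0$. I expect the main obstacle to be establishing the backward distance function rigorously: writing down every branch of the normal form and, above all, proving that no shorter route exists, i.e. ruling out clever alternations of highways and connectors. The subsequent summation is lengthy but entirely routine bookkeeping once the distance formula and its case boundaries, including the $m$-parity boundary effects, are correctly in hand.
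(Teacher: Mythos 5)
Your structural setup is sound and your treatment of the forward pairs is correct: every non-cycle arc of $C_{m,n}$ strictly decreases the position $p$ along the Hamiltonian cycle, while each cycle arc (other than the one closing the cycle) increases it by exactly $1$; hence any directed walk from $u$ to $v$ uses at least $p(v)-p(u)$ cycle arcs, so $d(u,v)=p(v)-p(u)$ whenever $p(u)\le p(v)$, and these pairs contribute $\binom{mn+1}{3}=\tfrac1{12}(2m^3n^3-2mn)$, the leading term. This forward/backward split is a genuinely different (and in this part cleaner) organization than the paper's, which instead partitions the ordered pairs into seven groups by the rows and columns of the endpoints (source on the spine; target on the spine with the source before, at, or after it in column order; both endpoints off the spine, split by cycle order, same column, or increasing column), recovering your forward count across its groups 1, 5 and part of 2.

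The gap is everything after that. The theorem asserts an exact polynomial, and every coefficient below $2m^3n^3/12$, as well as the correction $\beta$, is determined by the backward pairs, for which you supply no explicit distance formula, no optimality proof, and no summation — you only announce a ``canonical normal form'' and yourself flag that establishing it is the main obstacle. That obstacle is where the real content lies: for example, for two off-spine vertices in the same interior column with the target preceding the source on the cycle, the geodesic is the minimum of two competing detours (through the adjacent column via row $2$, or through the adjacent column via row $m$), whose lengths are linear in $r+s$ with opposite signs; it is the parity-dependent tie structure of this minimum summed over all pairs $(r,s)$ — not merely a boundary effect of row $m$ — that produces $\beta=3n-6$ versus $\beta=0$. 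The paper carries out exactly this kind of case-by-case geodesic identification and summation in its groups 2--4, 6 and 7. Until you write down all branches of the backward distance function, prove no shorter routes exist, and actually perform the summation, what you have is a correct plan plus the leading term, not a proof of the stated formula.
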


\begin{proof}
We denote the vertices of ${C}_{m,n}$ as in
Figure~\ref{protimesh}.
In the calculation we use $\sum_{i=1}^k i=\frac 12(k^2+k)$ and
$\sum_{i=1}^k i^2=\frac 16(2k^3+3k^2+k)$.
However, we need to evaluate also the following two sums.
First,
$$
\sum_{i=1}^{k/2}(1+2+\dots+2i)
=\sum_{i=1}^{k/2}\bigg(\frac 12\Big((2i)^2+2i\Big)\bigg)
=\sum_{i=1}^{k/2}(2i^2+i)=\frac 1{24}(2k^3+9k^2+10k).
$$
Second,
\begin{align}
\sum_{2\le r,s\le m}|r-s|
=&2\sum_{2\le r<s\le m}(s-r)=2\big((m{-}2)1+(m{-}3)2+\dots+1(m{-}2)\big)\nonumber\\
=&2\sum_{i=1}^{m-2}(m{-}1{-}i)i=\frac 13(m^3-3m^2+2m).\nonumber
\end{align}

We divide all distances in ${C}_{m,n}$ into $7$ groups.

{\bf 1.} {\em Distances from $(1,k)$ to all vertices of ${C}_{m,n}$.}
Recall that ${C}_{m,n}$ contains a Hamiltonian cycle $C$ with some chords.
Using these chords one cannot shorten the distance from $(1,k)$ to $(x,y)$,
where $2\le x\le m$ and $1\le y\le n$.
Only the distances to $(1,\ell)$, where $\ell<k$, can be shortened.
Hence, the sum of distances from $(1,k)$ equals $\binom{mn}2-\Delta_k$,
where $\Delta_k$ sums the shortenings of $k-1$ distances on $C$, those from
$(1,k)$ to $(1,\ell)$, $1\le\ell<k$.
Observe that each pair $(1,k)$, $(1,\ell)$, $1\le\ell<k$, contributes
to $\Delta_k$ by the number of vertices which are avoided when using the
edge $(2,\ell)(1,\ell)$ instead of the directed blue path from $(2,\ell)$
to $ (1,\ell)$.
Obviously, $\Delta_1=\Delta_2=0$, but $\Delta_3=2m{-}2$,
$\Delta_4=(2m{-}2)+2m$, etc.
For $\Delta=\sum_{k=1}^n\Delta_k$ we have
\begin{align}
\Delta=&[(2m{-}2)]+[(2m{-}2)+2m]+[(2m{-}2)+2m+(4m{-}2)]+\dots\nonumber\\
&\qquad\qquad\qquad+[(2m{-}2)+2m+\dots+((n{-}2m{-}2)+(n{-}2)m)]\nonumber\\
=&(n{-}2)(m{+}m{-}2)+(n{-}3)(2m)+\dots+2((n{-}3)m{+}m{-}2)+1(n{-}2)m\nonumber\\
=&(n{-}2)m+\dots+1(n{-}2)m+(m{-}2)(n{-}2)+\dots+(m{-}2)2\nonumber\\
=&m\sum_{i=1}^{n-2}(n{-}1{-}i)i+(m{-}2)2\sum_{i=1}^{n/2-1}(\tfrac n2-i)\nonumber\\
=&m(m{-}1)\sum_{i=1}^{n-2}i-m\sum_{i=1}^{n{-}2}i^2+2(m{-}2)\sum_{i=1}^{n/2-1}i\nonumber\\
=&\frac 1{12}(2mn^3-3mn^2-2mn-6n^2+12n).\nonumber
\end{align}
So the sum of distances considered in this case is
$$
W_1=n\binom {mn}2-\Delta=\frac 1{12}(6m^2n^3-2mn^3-3mn^2+2mn+6n^2-12n).
$$

{\bf 2.} {\em Distances from $(k,t)$ to $(1,\ell)$,
where $2\le k\le m$ and $1\le \ell<t\le n$.}
Here the distances
\[\begin{array}{lrl}
\mbox{ to $(1,1)$ are }& m+(m{+}1)+\dots+(m{-}1)n&=\binom{(m-1)n+1}2-\binom m2\nonumber\\
\mbox{ to $(1,2)$ are }& 2+3+\dots+((m{-}1)(n{-}2){+}1)&=\binom{(m-1)(n-2)+2}2-1\nonumber\\
\mbox{ to $(1,3)$ are }& m+(m{+}1)+\dots+(m{-}1)(n{-}2)&=\binom{(m-1)(n-2)+1}2-\binom m2\nonumber\\
\mbox{ to $(1,4)$ are }& 2+3+\dots+(m{-}1)(n{-}4)+1&=\binom{(m-1)(n-4)+2}2-1\nonumber\\
& \vdots&\nonumber\\
\mbox{ to $(1,n{-}2)$ are }& 2+3+\dots+((m{-}1)2{+}1)&=\binom{(m-1)2+2}2-1\nonumber\\
\mbox{ to $(1,n{-}1)$ are }& m+(m{+}1)+\dots+(m{-}1)2&=\binom{(m-1)2+1}2-\binom m2\nonumber\\
\mbox{ to $(1,n)$ are }& 0.&\nonumber
\end{array}\]
And their sum is
\begin{align}
W_2=&\sum_{i=1}^{n/2}\binom{(m{-}1)2i+1}2-\sum_{i=1}^{n/2}\binom m2
+\sum_{i=1}^{n/2-1}\binom{(m{-}1)2i+2}2-\sum_{i=1}^{n/2-1} 1\nonumber\\
=&\frac 1{12}(2m^2n^3+m^2n-4mn^3+6mn^2-11mn+2n^3-6n^2+10n).\nonumber
\end{align}

{\bf 3.} {\em Distances from $(k,t)$ to $(1,\ell)$, where $2\le k\le m$ and $1\le t=\ell\le n$.}
Here the distances
\[\begin{array}{lrl}
\mbox{ to $(1,\ell)$ for odd $\ell$ are }& 1+2+\dots+(m{-}1)&=\binom m2\nonumber\\
\mbox{ to $(1,\ell)$ for even $\ell<n$ are }& 1+4+5+\dots+(m{+}1)&=\binom{m+2}2-5\nonumber\\
\mbox{ to $(1,\ell)$ for $\ell=n$ are }&(m{+}1)+(m{+}2)+\dots+(2m{-}1)&=\binom{2m}2-\binom{m+1}2\nonumber
\end{array}\]
And their sum is
\begin{align}
W_3=&\Big(\frac n2-1\Big)\bigg(\binom m2+\binom{m+2}2-5)+\binom m2+\binom{2m}2-\binom{m+1}2\bigg)\nonumber\\
=&\frac 1{12}(6m^2n+12m^2+6mn-36m-24n+48).\nonumber
\end{align}

{\bf 4.} {\em Distances from $(k,t)$ to $(1,\ell)$, where $2\le k\le m$ and $1\le t<\ell\le n$.}
In this case when $t$ is odd or $k=2$ then the shortest path is
$$
(k,t),(k{-}1,t),\dots,(1,t),(1,t{+}1),\dots,(1,\ell).
$$
On the other hand if $t$ is even and $k\ge 3$ then the shortest path is
$$
(k,t),(k,t{+}1),(k{-}1,t{+}1),\dots,(1,t{+}1),(1,t{+}2),\dots,(1,\ell).
$$
So the sum of distances
\[\begin{array}{lrl}
\mbox{ for $\ell-t=1$ is }& (n{-}1)(2+3+\dots+m)&=(n{-}1)\big(\binom{m+1}2-\binom 22\big)\nonumber\\
\mbox{ for $\ell-t=2$ is }& (n{-}2)(3+4+\dots+(m{+}1))&=(n{-}2)\big(\binom{m+2}2-\binom 32\big)\nonumber\\
& \vdots&\nonumber\\
\mbox{ for $\ell-t=n{-}1$ is }& 1(n+(n{+}1)+\dots+(n{+}m{-}2))&=1\big(\binom{n+m-1}2-\binom n2\big)\nonumber.
\end{array}\]
And the sum of distances considered in this case is
\begin{align}
W_4=&\sum_{i=1}^{n-1}(n{-}i)\bigg(\binom{m+i}2-\binom{1+i}2\bigg)\nonumber\\
=&\frac 1{12}(3m^2n^2-3m^2n+2mn^3-3mn^2+mn-2n^3+2n).\nonumber
\end{align}
So, we are done with this case.

It remains to consider the distances between the vertices $(r,t)$ and
$(s,\ell)$ where $2\le r,s\le m$ and $1\le t,\ell\le n$.
Let $P$ be the subpath of the Hamiltonian cycle $C$ in ${C}_{m,n}$
starting at $(2,n)$ and terminating at $(2,1)$.
We say that $(r,t)$ preceeds $(s,\ell)$ if $(r,t)$ preceeds $(s,\ell)$ on
$P$.

{\bf 5.} {\em Distances from $(r,t)$ to $(s,\ell)$, where $2\le r,s\le m$ and $1\le t,\ell\le n$ when $(r,t)$ preceeds
$(s,\ell)$ on $P$.}

In this case the distance from $(r,t)$ to $(s,\ell)$ equals the distance of
these vertices on $P$.
Hence,
$$
W_5=\sum_{i=1}^{(m-1)n}\binom i2=\frac 1{12}(2m^3n^3-6m^2n^3+6mn^3-2mn-2n^3+2n).
$$

{\bf 6.} {\em Distances from $(r,t)$ to $(s,\ell)$,
where $2\le r,s\le m$ and $1\le t=\ell\le n$ when $(s,\ell)$ preceeds
$(r,t)$ on $P$.}
Since both $(r,t)$ and $(s,\ell)$ are in the same column, see
Figure~\ref{protimesh}, we have $r<s$ if $t$ is odd and $r>s$ if $t$ is
even.

First assume that $t=1$.
Then $r<s$ and the shortest path from $(r,t)$ to $(s,\ell)$ is
$$
(r,1),(r,2),(r{+}1,2),\dots,(m,2),(m,1),(m{-}1,1),\dots,(s,1)
$$
with length $1+(m-r)+1+(m-s)=2m+2-r-s$.
Hence the sum of considered distances in the first column is
\begin{align}
\Delta_1=&\sum_{2\le r<s\le m}(2m+2-r-s)
=\binom{m-1}2(2m{+}2)-\sum_{2\le r<s\le m}(r+s)\nonumber\\
=&\binom{m-1}2(2m{+}2)-(m{-}2)\sum_{i=2}^m i
=\frac 1{12}(6m^3-18m^2+12m).\nonumber
\end{align}

Now let $t=n$.
We argue similarly.
In this case $r>s$ and the shortest path from $(r,t)$ to $(s,\ell)$ is
$$
(r,n),(r{+}1,n),\dots,(m,n),(m,n{-}1)(m{-}1,n{-}1),\dots,(s,n{-}1),(s,n)
$$
with length $(m-r)+1+(m-s)+1=2m+2-r-s$.
Hence the sum of considered distances in the $n$-th column is
$$
\Delta_n=\sum_{2\le r<s\le m}(2m+2-r-s)=\Delta_1
=\frac 1{12}(6m^3-18m^2+12m).
$$

Now let $m$ be odd and $2\le t\le n{-}1$.
First assume that $t$ is odd.
Then $r<s$ and the shortest path from $(r,t)$ to $(s,\ell)$ either
uses the column $t-1$ or $t+1$.
Hence, the shortest path is one of the following two
\[\begin{array}{ll}
(r,t),(r,t{+}1),\dots,(m,t{+}1),(m,t),(m{-}1,t),\dots,(s,t)
& \mbox{with length } 2m+2-r-s,\\
(r,t),(r{-}1,t),\dots,(2,t),(2,t{-}1),\dots,(s,t{-}1),(s,t)
& \mbox{with length } r+s+2.
\end{array}\]
Hence the distance from $(r,t)$ to $(s,\ell)$ is $\min\{2m+2-r-s,r+s+2\}$.
Let us split the distances according to the value of $r$.
Observe that if $r=2$ then the second path is the shortest one, while if
$r=m$ then the first path is the shortest one.

\[\begin{array}{rccccccccccccc}
r=2:\quad & 3 &+&4&+&5&+&6&+&\dots&+&(m{-}1)&+&m\\
r=3:\quad & &&5&+&6&+&7&+&\dots&+&m&+&(m{-}1)\\
r=4:\quad & & && &7&+&8&+&\dots&+&(m{-}1)&+&(m{-}2)\\
&&&&&&&&&\vdots&&&\vdots&\\
r=\frac{m+1}2:\quad&&&&&&&m&+&&&&+&(m{-}\frac{m-1}2{+}2)\\
&&&&&&&&&\vdots&&&\vdots&\\
r=m{-}2:\quad & &&&&& & & & & &5&+&4\\
r=m{-}1:\quad && &&&& && & & & & &3
\end{array}\]

These summands are symmetric with respect to the diagonal formed by values
$m$.
Therefore using the formula derived in the beginning of this proof we get
\begin{align}
\Delta_t=&\frac{m-1}2m+2\Big(\frac{m-1}2\big(1+2+\dots+(m{-}1)\big)
-\sum_{i=1}^{(m-1)/2}\big(1+2+\dots+2i\big)\Big)\nonumber\\
=&\frac 1{12}(4m^3-9m^2+2m+3).\nonumber
\end{align}
Now assume that $t$ is even, so that $r>s$.
Again, the shortest path is one of the following two
\[\begin{array}{ll}
(r,t),(r,t{+}1),\dots,(2,t{+}1),(2,t),\dots,(s,t)
& \mbox{with length } r+s+2,\\
(r,t),\dots,(m,t),(m,t{-}1),\dots,(s,t{-}1),(s,t)
& \mbox{with length } 2m+2-r-s.
\end{array}\]
Hence the distance from $(r,t)$ to $(s,t)$ is again
$\min\{2m+2-r-s,r+s+2\}$, and we get the same formula as in the case when
$t$ is odd.

Now let $m$ be even and $2\le t\le n{-}1$.
We already know that regardless of the parity of $t$ it holds
$$
d\big((r,t),(s,t)\big)=\min\{2m+2-r-s,r+s+2\}.
$$
Nevertheless, assume that $t$ is odd and split the distances according to
the value of $r$.
\[\begin{array}{rccccccccccccc}
r=2:\quad & 3&+&4&+&5&+&6&+&\cdots&+&(m{-}1)&+&m\\
r=3:\quad && &5&+&6&+&7&+&\cdots&+&m&+&(m{-}1)\\
r=4:\quad &&& &&7&+&8&+&\cdots&+&(m{-}1)&+&(m{-}2)\\
&&&&&&&&&\vdots&&\\
r=\frac m2:&&&&&&&(m{-}1)&+&m&+&(m{-}1)&\cdots\\
r=\frac m2+1:&&&&&&&&&(m{-}1)&+&(m{-}2)&\cdots\\
&&&&&&&&&\vdots&& \\
r=m{-}2:\quad & & & &&&&&& & &5&+&4\\
r=m{-}1:\quad & & & &&&&&&&& & &3
\end{array}\]

Again the summands are symmetric with respect to the diagonal formed by
values $m$.
Therefore using the formula derived in the beginning of this proof we get
\begin{align}
\Delta_t=&\frac{m-2}2m+2\Big(\frac{m-2}2\big(1+2+\dots+(m{-}1)\big)
-\sum_{i=1}^{(m-2)/2}\big(1+2+\dots+2i\big)\Big)\nonumber\\
=&\frac 1{12}(4m^3-9m^2+2m).\nonumber
\end{align}
And since for even $t$ we get the same distances, $\Delta_t$ does not depend
on the parity of $t$ though it depends on the parity of $m$.

Hence, the contribution of considered pairs to $W(D_{m,n})$ is
$$
W_6=2\Delta_1+(n{-}2)\Delta_2
=\frac 1{12}(4m^3n+4m^3-9m^2n-18m^2+2mn+20m+\beta)
$$
where $\beta=3(n{-}2)$ if $m$ is odd and $\beta=0$ if $m$ is even.

{\bf 7.} {\em Distances from $(r,t)$ to $(s,\ell)$,
where $2\le r,s\le m$ and $1\le t<\ell\le n$.}
Obviously, in this case $(s,\ell)$ preceeds $(r,t)$ on $P$.
Using the paths $(m,q),(m{-}1,q),\dots,(2,q)$ for odd $q$,
$(2,q),(3,q),\dots,(m,q)$ for even $q$, and for $2<p<m$ the paths
$(p,1),(p,2),\dots,(p,n)$ which exist if $m\ge 4$,
one can see that the distance from $(r,t)$ to $(s,\ell)$ in $D_{m,n}$ equals
the distance from $(r,t)$ to $(s,\ell)$ in the underlying graph.
The only exceptions occure when $r=s=2$ or $r=s=m$.
If $r=s=2$ then the distance from $(r,t)$ to $(s,\ell)$ is $\ell-t+2$ except
the case when $\ell=t+1$ and $t$ is odd, in which case the distance is $1$.
On the other hand if $r=s=m$ then the distance from $(r,t)$ to $(s,\ell)$ is
$\ell-t+2$ except the case when $\ell=t+1$ and $t$ is even, in which case
the distance is $1$ again.
So for any parity of $t$, two distances between the layers $t$ and $\ell$
exceed the corresponding distance in the underlying graph by 2, except the
case when $\ell=t+1$, when only one distance between the layers $t$ and
$\ell$ exceeds the corresponding distance in the underlying graph by $2$.
Using the formula from the beginning of the proof, the sum of all distances
from the vertices of $\{(r,t)\}_{r=2}^m$ to the vertices of
$\{(s,\ell)\}_{s=2}^m$ is
\begin{align}
\Delta_{t,\ell}=&\sum_{2\le r,s\le m}|r-s|+\sum_{2\le r,s\le
m}(\ell-t)+4-\delta\nonumber\\
=&\frac 13(m^3-3m^2+2m)+(m{-}1)^2(\ell-t)+4-\delta,\nonumber
\end{align}
where $\delta=2$ if $\ell=t+1$ and $\delta=0$ otherwise.
Consequently, the contribution of considered distances is
\begin{align}
W_7=&\sum_{t=1}^{n-1}\bigg(\sum_{\ell=t+1}^n\Big(\frac 13(m^3-3m^2+2m)
+(m{-}1)^2(\ell-t)+4\Big)-2\bigg)\nonumber\\
=&\frac{n(n{-}1)}2\frac 13(m^3-3m^2+2m)
+(m{-}1)^2\sum_{i=1}^{n-1}(n-i)i+\frac{n(n{-}1)}2 4-2(n{-}1)\nonumber\\
=&\frac 1{12}
(2m^3n^2-2m^3m+2m^2n^3-6m^2n^2\nonumber\\
&\qquad+4m^2n-4mn^3+4mn^2+2n^3+24n^2-50n+24).\nonumber
\end{align}
Now $W({C}_{m,n})=\sum_{i=1}^7 W_i$.
\end{proof}

%-----slika mesh---------
%\hspace{0.5cm}
%\input{Figures/mesh_Spacepan.tex}

%-----slika 5x8 simon---------
\hspace{0.5cm}

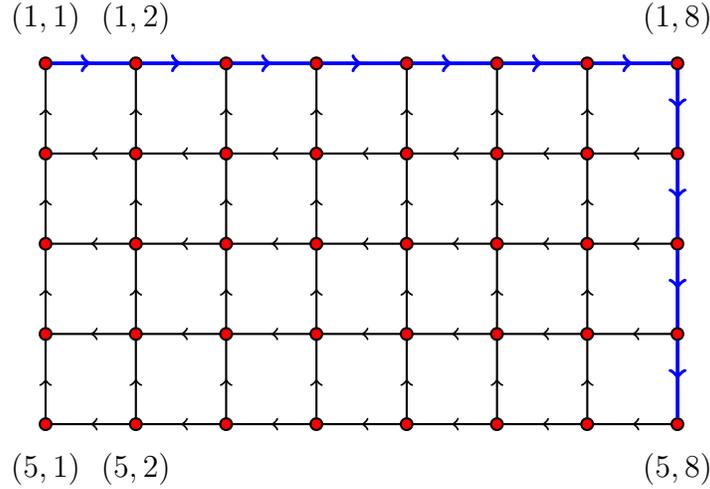
\begin{figure}[ht!]
\begin{center}
\begin{tikzpicture}[scale=1.2,style=thick]

%%% Define Vertices
\node [My Style, name=11]   at (1,1) {};
\node [My Style, name=12]   at (1,2) {};
\node [My Style, name=13]   at (1,3) {};
\node [My Style, name=14]   at (1,4) {};
\node [My Style, name=15]   at (1,5) {};

\node [My Style, name=21]   at (2,1) {};
\node [My Style, name=22]   at (2,2) {};
\node [My Style, name=23]   at (2,3) {};
\node [My Style, name=24]   at (2,4) {};
\node [My Style, name=25]   at (2,5) {};

\node [My Style, name=31]   at (3,1) {};
\node [My Style, name=32]   at (3,2) {};
\node [My Style, name=33]   at (3,3) {};
\node [My Style, name=34]   at (3,4) {};
\node [My Style, name=35]   at (3,5) {};

\node [My Style, name=41]   at (4,1) {};
\node [My Style, name=42]   at (4,2) {};
\node [My Style, name=43]   at (4,3) {};
\node [My Style, name=44]   at (4,4) {};
\node [My Style, name=45]   at (4,5) {};

\node [My Style, name=51]   at (5,1) {};
\node [My Style, name=52]   at (5,2) {};
\node [My Style, name=53]   at (5,3) {};
\node [My Style, name=54]   at (5,4) {};
\node [My Style, name=55]   at (5,5) {};

\node [My Style, name=61]   at (6,1) {};
\node [My Style, name=62]   at (6,2) {};
\node [My Style, name=63]   at (6,3) {};
\node [My Style, name=64]   at (6,4) {};
\node [My Style, name=65]   at (6,5) {};

\node [My Style, name=71]   at (7,1) {};
\node [My Style, name=72]   at (7,2) {};
\node [My Style, name=73]   at (7,3) {};
\node [My Style, name=74]   at (7,4) {};
\node [My Style, name=75]   at (7,5) {};

\node [My Style, name=81]   at (8,1) {};
\node [My Style, name=82]   at (8,2) {};
\node [My Style, name=83]   at (8,3) {};
\node [My Style, name=84]   at (8,4) {};
\node [My Style, name=85]   at (8,5) {};

%%% Draw edges
%hamiltonski
\draw[middlearrow={>}] (11) -- (12); 
\draw[middlearrow={>}] (12) -- (13);  
\draw[middlearrow={>}] (13) -- (14); 
\draw[middlearrow={>}] (14) -> (15);
\draw[middlearrow={>},blue,line width=0.5mm] (15) -> (25);
\draw[middlearrow={>},blue,line width=0.5mm] (25) -> (35);
\draw[middlearrow={>},blue,line width=0.5mm] (35) -> (45);
\draw[middlearrow={>},blue,line width=0.5mm] (45) -> (55);
\draw[middlearrow={>},blue,line width=0.5mm] (55) -> (65);
\draw[middlearrow={>},blue,line width=0.5mm] (65) -- (75); 
\draw[middlearrow={>},blue,line width=0.5mm] (75) -- (85);  
\draw[middlearrow={>},blue,line width=0.5mm] (85) -- (84); 
\draw[middlearrow={>},blue,line width=0.5mm] (84) -> (83);
\draw[middlearrow={>},blue,line width=0.5mm] (83) -> (82);
\draw[middlearrow={>},blue,line width=0.5mm] (82) -> (81);

\draw[middlearrow={>}] (21) -- (22); 
\draw[middlearrow={>}] (22) -- (23);  
\draw[middlearrow={>}] (23) -- (24); 
\draw[middlearrow={>}] (24) -> (25);

\draw[middlearrow={>}] (31) -- (32); 
\draw[middlearrow={>}] (32) -- (33);  
\draw[middlearrow={>}] (33) -- (34); 
\draw[middlearrow={>}] (34) -> (35);

\draw[middlearrow={>}] (41) -- (42); 
\draw[middlearrow={>}] (42) -- (43);  
\draw[middlearrow={>}] (43) -- (44); 
\draw[middlearrow={>}] (44) -> (45);

\draw[middlearrow={>}] (51) -- (52); 
\draw[middlearrow={>}] (52) -- (53);  
\draw[middlearrow={>}] (53) -- (54); 
\draw[middlearrow={>}] (54) -> (55);

\draw[middlearrow={>}] (61) -- (62); 
\draw[middlearrow={>}] (62) -- (63);  
\draw[middlearrow={>}] (63) -- (64); 
\draw[middlearrow={>}] (64) -> (65);

\draw[middlearrow={>}] (71) -- (72); 
\draw[middlearrow={>}] (72) -- (73);  
\draw[middlearrow={>}] (73) -- (74); 
\draw[middlearrow={>}] (74) -> (75);

\draw[middlearrow={>}] (81) -- (71); 
\draw[middlearrow={>}] (71) -- (61);  
\draw[middlearrow={>}] (61) -- (51); 
\draw[middlearrow={>}] (51) -> (41);
\draw[middlearrow={>}] (41) -- (31); 
\draw[middlearrow={>}] (31) -- (21);  
\draw[middlearrow={>}] (21) -- (11);

\draw[middlearrow={>}] (82) -- (72); 
\draw[middlearrow={>}] (72) -- (62);  
\draw[middlearrow={>}] (62) -- (52); 
\draw[middlearrow={>}] (52) -> (42);
\draw[middlearrow={>}] (42) -- (32); 
\draw[middlearrow={>}] (32) -- (22);  
\draw[middlearrow={>}] (22) -- (12); 

\draw[middlearrow={>}] (83) -- (73); 
\draw[middlearrow={>}] (73) -- (63);  
\draw[middlearrow={>}] (63) -- (53); 
\draw[middlearrow={>}] (53) -> (43);
\draw[middlearrow={>}] (43) -- (33); 
\draw[middlearrow={>}] (33) -- (23);  
\draw[middlearrow={>}] (23) -- (13); 

\draw[middlearrow={>}] (84) -- (74); 
\draw[middlearrow={>}] (74) -- (64);  
\draw[middlearrow={>}] (64) -- (54); 
\draw[middlearrow={>}] (54) -> (44);
\draw[middlearrow={>}] (44) -- (34); 
\draw[middlearrow={>}] (34) -- (24);  
\draw[middlearrow={>}] (24) -- (14);

%%% Label vertices
\draw (1,0.5) node {${(5,1)}$};
\draw (2,0.5) node {${(5,2)}$};
\draw (8,0.5) node {${(5,8)}$};
\draw (1,5.5) node {${(1,1)}$};
\draw (2,5.5) node {${(1,2)}$};
\draw (8,5.5) node {${(1,8)}$};

\end{tikzpicture}
\end{center}
\caption{An orientation of the grid $G_{5,8}$ from Conjecture \ref{tadeja}.}

\label{mesh2}
\end{figure}

In~\cite{ladder21}, the authors did not evaluate the Wiener index of
${D}_{m,n}$.
In order to be able to compare the Wiener indices of $C_{m,n}$ and
$D_{m,n}$, we prove the following statement.

\begin{theorem}
\label{thm:conj_formula}
We have
\begin{align}
W({D}_{m,n})=
\tfrac 1{12}(&10m^3n^2+10m^2n^3-6m^3n-24m^2n^2-6mn^3\nonumber\\
&+4m^3+14m^2n+14mn^2+4n^3-12mn-4m-4n).\nonumber
\end{align}
\end{theorem}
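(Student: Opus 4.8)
The plan is to compute $W(D_{m,n})$ exactly as $W(C_{m,n})$ was computed: first pin down the distance function $d\big((i,j),(s,\ell)\big)$ in closed form, then split $V(D_{m,n})\times V(D_{m,n})$ into a few groups on which the distance admits a single polynomial description, and finally evaluate each group using $\sum_{i\le k} i$, $\sum_{i\le k} i^2$ and the two auxiliary sums recorded at the start of the proof of Theorem~\ref{thm:our_formula}. I adopt the labelling of Figure~\ref{mesh2}, so $(i,j)$ has row $i$ (row $1$ on top, row $m$ at the bottom) and column $j$ (column $1$ on the left, column $n$ on the right). The arcs then obey three rules that drive everything: columns $1,\dots,n-1$ are directed up and column $n$ is directed down; row $1$ is directed right and rows $2,\dots,m$ are directed left.

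First I would extract the reachability consequences of these rules. One may decrease the row index for free by moving up, but only outside column $n$; one may decrease the column index for free by moving left, but only outside row $1$; increasing the column index forces a passage along row $1$ (the unique rightward layer); and increasing the row index forces a passage down column $n$ (the unique downward layer). This makes the shortest path rigid. When the target can be reached by monotone up/left/right moves compatible with the orientation, the distance is simply $|i-s|+|j-\ell|$; otherwise the path is forced onto the detour ``up to row $1$, right to column $n$, down to row $s$, left to column $\ell$'', of length $(i-1)+(n-j)+(s-1)+(n-\ell)$, with an additive correction of $+2$ in certain degenerate corners (for example a source in row $1$ that must first descend before it can move left, or a pair lying inside column $n$ that must re-ascend via row $1$). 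The case split is governed by the signs of $s-i$ and $\ell-j$ together with the indicators $i=1$, $j=n$ and $\ell=n$.

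With the distances in hand I would partition all ordered pairs according to these cases, mirroring the group structure of the previous proof but with the special roles now played by row $1$ (the rightward layer) and column $n$ (the downward layer), so that on each block the distance is one polynomial in $i,j,s,\ell$. Summing each block is then a routine double or triple summation whose inner sums collapse via the recorded formulas; adding the blocks and simplifying yields the stated degree-$5$ polynomial. As a consistency check one can set $m=2$: the expression reduces to $\tfrac13(8n^3+3n^2-5n+6)$, the ladder value of Kraner~\v{S}umenjak et al.~\cite{ladder21}, which also confirms that the restriction $m\ge 4$ imposed in Theorem~\ref{thm:our_formula} is unnecessary here.

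I expect the main obstacle to be the distance analysis rather than the summation: one must verify that the detour above is genuinely optimal in every nontrivial configuration, catalogue exactly the corner cases in which it is off by an additive constant, and confirm that the resulting partition covers each ordered pair once and only once. Once the case list is proved exhaustive and the per-case formulas are correct, the remainder is purely mechanical polynomial arithmetic.
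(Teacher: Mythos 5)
Your overall strategy -- pin down $d\big((i,j),(s,\ell)\big)$ in closed form, partition the ordered pairs into blocks on which the distance is a single polynomial, and sum -- is the same in spirit as the paper's, and your $m=2$ consistency check against the ladder value is correct. The paper organizes the computation differently: it groups by \emph{source} into $S_1$ (row $1$), $S_2$ (column $n$) and $S_3$ (the rest), carves each target set into rectangular blocks $Q(a,b,c,d)$ entered through a corner, sums each block with the closed form $B(t,r,s)=\binom{t+r+s}{3}-\binom{t+r}{3}-\binom{t+s}{3}+\binom{t}{3}$, and then, rather than expanding everything by hand, determines the degree-$(3,3)$ polynomial by solving a linear system against computer-computed values of $W(D_{m,n})$ for $2\le m,n\le 5$. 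Your route is more self-contained (no interpolation step) but pushes all the work into the distance catalogue and the raw summations.

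There is, however, a concrete error in the distance dichotomy as you state it, and it is not a negligible corner case. You classify a pair as ``monotone'' only if the target is reachable by up/left/right moves, and otherwise assign the detour length $(i-1)+(n-j)+(s-1)+(n-\ell)$. This mishandles every source in column $n$ with a target strictly below it: from $(i,n)$ with $i\ge 2$ to $(s,\ell)$ with $s>i$, the orientation allows the monotone \emph{down-then-left} path down column $n$ to $(s,n)$ and left along row $s$, of length $(s-i)+(n-\ell)$, which beats your detour by $2i-2>0$. (The paper handles exactly this in its group $S_2$, where the distance from $(a,n)$ to $(s,\ell)$ is recorded as $|s-a|+(n-\ell)$.) Summed over the $\Theta(m^2n)$ such pairs this discrepancy is $\Theta(m^3n)$, so with the dichotomy as written you would not recover the stated polynomial. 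The fix is easy -- add ``down'' to the admissible monotone moves when the source lies in column $n$ -- but it must be made, and it illustrates why the exhaustiveness check you defer to the end is really the entire content of the proof: besides this case you still need to verify that the only $+2$ corrections are the two you name (both endpoints in row $1$ with the target to the left, and both endpoints in column $n$ with the target above), and then carry out all the summations, none of which the proposal actually does.
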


\begin{proof}
We denote the vertices of $G={D}_{m,n}$ as in Figure~\ref{mesh2}.
%In the calculation we use also $\sum_{i=1}^k i^3=\frac 14(k^4+2k^3+k^2)$ and one more formula. 
Let $(x,y)\in V(G)$.
We describe specific subgraphs of $G$ with respect to $(x,y)$ and we describe
a formula for calculating distances from $(x,y)$ to the vertices of the
specific subgraph.
Let $H$ be a subgraph of $G$ which is an orientation of $P_r\square P_s$.
Of course, $r\le m$ and $s\le n$.
Moreover, let $H$ has a vertex $(a,b)$ in its corner,
such that for every $(u,v)\in V(H)$ we have
$d_G((x,y),(u,v))=d_G((x,y),(a,b))+d_G((a,b),(u,v))$.
I.e., the corner vertex $(a,b)$ is on a shortest path from $(x,y)$ to every vertex
of $H$.
Finally, let the distance from $(a,b)$ to vertices of $H$ are the same in $H$
as in the underlying graph.
We denote the graph $H$ by $Q(a,b,c,d)$, where $(c,d)$ is a corner of $H$
opposite to $(a,b)$.
Obviously, $|a-c|=r-1$ and $|b-d|=s-1$.
Denote $t=d_G((x,y),(a,b))$.
Then the sum of distances from $(x,y)$ to the vertices of $H$ is
\begin{align}
B(t,r,s)=&t+\dots+(t{+}r{-}1)+(t{+}1)+\dots+(t{+}r)+\dots&+(t{+}s{-}1)+\dots+(t{+}s{+}r{-}2)\nonumber\\
=&\binom{t+r+s}3-\binom{t+r}3-\binom{t+s}3+\binom t3.\nonumber
\end{align}

We divide the vertices of $G$ into three groups, $S_1$, $S_2$ and $S_3$,
and for each group $S_i$, $1\le i\le 3$, we calculate the contribution of vertices
of $S_i$ to $W(G)$, i.e., we calculate $\sum_{u\in S_i}w_G(u)$.
However, our calculation is not so detailed as in the proof of
Theorem~{\ref{thm:our_formula}}.
The reason is that when we find that the formulae do not split into cases
(like the parity of $m$ in Theorem~{\ref{thm:our_formula}}), then the
resulting formula is a polynomial which is of at most  $3$rd order in both
$m$ and $n$.
Hence, its $16$ coefficients can be calculated using a system
of linear equations for small $m$ and $n$ ($2\le m,n\le 5$), for which
$W({D}_{m,n})$ can be calculated by a computer.
(In fact, the resulting polynomial was checked on a much wider range of $m$
and $n$.)

{\bf 1}. $S_1=\{(1,a);\ 1\le a\le n\}$.
Let $1\le a\le n$.
Observe that $G$ contains $Q(2,n,m,1)$.
So considering first the distances to the vertices of $S_1$ and then to the
vertices of $Q(2,n,m,1)$ we get
\begin{align}
w_G(1,a)=&1+2+\dots+(n{-}a)+(2(n{-}a){+}3)+(2(n{-}a){+}4)+\dots+(2(n{-}a){+}a{+}1)\nonumber\\
+&B(n{-}a{+}1,m{-}1,n)=\binom{n-a+1}2+\binom{2n-a+2}2-\binom{2n-2a+3}2\nonumber\\
+&\binom{2n+m-a}3-\binom{n+m-a}3-\binom{2n-a+1}3+\binom{n+1-a}3,\nonumber
\end{align}
which gives
$$
W_1=\sum_{a=1}^n w_G(1,a)=\frac 1{12}(6m^2n^2+12mn^3-18mn^2-4n^3+12n^2-8n).
$$

{\bf 2}. $S_2=\{(a,n);\ 2\le a\le m\}$.
Let $2\le a\le m$.
Then $G$ contains $Q(m,n{-}1,1,1)$.
So considering first the distances to vertices of $S_2\cup\{(1,n)\}$ and then
to the vertices of $Q(m,n{-}1,1,1)$ we get
\begin{align}
w_G(a,n)=&1+2+\dots+(m{-}a)+(a{+}1)+(a{+}2)+\dots+(2a{-}1)\nonumber\\
&\qquad\qquad+(n{-}1)\sum_{s=1}^m|s-a|+m\sum_{i=1}^{n-1}i\nonumber\\
=&\binom{m-a+1}2+\binom{2a}2-\binom{a-1}2+(n{-}1)\sum_{s=1}^m|s-a|+m\binom n2,\nonumber
\end{align}
which gives
$$
W_2=\sum_{a=2}^m w_G(a,n)=\frac 1{12}(4m^3n+6m^2n^2+4m^3-12m^2n-6mn^2+8mn-4m).
$$

{\bf 3}. $S_3=\{(a,b);\ 2\le a\le m;\ 1\le b\le n{-}1\}$.
Let $2\le a\le m$ and $1\le b\le n{-}1$.
Now we consider first the distances to the vertices of $Q(a,b,1,1)$,
second the distances to $(1,i)$ where $b<i\le n$,
then the distances to vertices of $Q(2,n,a,b{+}1)$,
and finally the distances to the vertices of $Q(a{+}1,n,m,1)$.
We get
\begin{align}
w_G(a,b)=&B(0,a,b)+a+(a{+}1)+\dots+(a{+}(n{-}b){-}1)\nonumber\\
&+B(a{+}n{-}b,a{-}1,n{-}b)+B(2a{+}n{-}b{-}1,m{-}a,n)\nonumber\\
=&\binom{a+b}3-\binom a3-\binom b3+\binom{n+a-b}2-\binom a2 +\binom{2n+2a-2b-1}3\nonumber\\
-&\binom{n+2a-b-1}3-\binom{2n+a-2b}3+\binom{n+a-b}3+\binom{2n+m+a-b-1}3\nonumber\\
&-\binom{n+m+a-b-1}3-\binom{2n+2a-b-1}3+\binom{n+2a-b-1}3,\nonumber
\end{align}
which gives
\begin{align}
\sum_{a=2}^m &w_G(a,b)=\binom{m+b+1}4-\binom{b+2}4-\binom{m+1}4-(m{-}1)\binom b3\nonumber\\
&+\binom{n+m-b+1}3-\binom{n-b+2}3-\binom{m+1}3\nonumber\\
&+\sum_{i=1}^m\binom{2n-2b-1+2i}3-\binom{2n-2b+1}3-\binom{2n+m-2b+1}4+\binom{2n-2b+2}4\nonumber\\
&+\binom{n+m-b+1}4-\binom{n-b+2}4+\binom{2n-2m-b}4-\binom{2n+m-b+1}4\nonumber\\
&-\binom{n+2m-b}4+\binom{n+m-b+1}4-\sum_{i=1}^m\binom{2n-b-1+2i}3+\binom{2n-b+1}3,\nonumber
\end{align}
and consequently
\begin{align}
W_3=\sum_{b=1}^{n-1}\sum_{a=2}^m w_G(a,b)
=&\frac 1{12}(10m^3n^2+10m^2n^3-10m^3n-36m^2n^2-18mn^3\nonumber\\
&+26m^2n+38mn^2+8n^3-20mn-12n^2+4n).\nonumber
\end{align}
Now $W({D}_{m,n})=\sum_{i=1}^3 W_i$.
\end{proof}

%
%
% ------------------------------------------------------------------------
%

\section{Comparing Wiener indices}

By Theorems~{\ref{thm:our_formula}} and~{\ref{thm:conj_formula}}, in
variables $m$ and $n$ the polynomial $W({C}_{m,n})$ is of 6th order while
$W({D}_{m,n})$ is only of 5th order.
Therefore, for big $m$ and $n$ we have
$W({C}_{m,n})>W({D}_{m,n})$.
In the next proof we show that
$W({C}_{m,n})>W({D}_{m,n})$ for all\
$m$ and $n$ for which ${C}_{m,n}$ exists.

\begin{theorem}
\label{thm:comparing}
Let $m\ge 3$ and let $n$ be even, $n\ge 4$.
Then $W({C}_{m,n})>W({D}_{m,n})$.
\end{theorem}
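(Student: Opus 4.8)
The plan is to treat the statement as a purely polynomial inequality, working directly from the two closed forms supplied by Theorems~\ref{thm:our_formula} and~\ref{thm:conj_formula}. The first thing I would record is that the parity correction $\beta$ in the formula for $W(C_{m,n})$ is nonnegative (indeed $\beta=3n-6\ge 0$ for $n\ge 2$, or $\beta=0$) and enters $W(C_{m,n})$ with a positive sign. Hence it suffices to prove the inequality after discarding it: writing $g(m,n)=W(C_{m,n})\big|_{\beta=0}-W(D_{m,n})$, we have $W(C_{m,n})\ge W(C_{m,n})\big|_{\beta=0}=W(D_{m,n})+g(m,n)$, so the theorem follows once $g(m,n)>0$.

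Next I would subtract the two polynomials (clearing the common factor $\tfrac1{12}$). The result is a polynomial of degree $6$ with leading term $2m^3n^3$, consistent with the degree count already noted before the theorem. The key structural step is to recognize that the whole difference carries a factor $(n-2)$; concretely I expect
\[
12\,g(m,n)=(n-2)\,h(m,n),\qquad
h(m,n)=2m^3n(n-2)-3m^2(2n^2-3n-1)+2m(2n^2-n+3)-4(n^2-4n+9),
\]
which I would confirm by expanding $(n-2)h(m,n)$ and matching it coefficient by coefficient against the subtraction. Since $n-2>0$ for $n\ge 4$, this reduces the task to showing $h(m,n)>0$ whenever $m\ge 3$ and $n\ge 4$.

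To dispatch $h$ I would view it as a cubic in $m$ and use monotonicity. Its $m$-derivative is $\partial h/\partial m=6m\big[mn(n-2)-(2n^2-3n-1)\big]+2(2n^2-n+3)$; for $m\ge 3$ and $n\ge 4$ the bracket is at least $3n(n-2)-(2n^2-3n-1)=n^2-3n+1>0$, while $2n^2-n+3>0$ always, so $h$ is strictly increasing in $m$ on $[3,\infty)$. It then remains only to check the base point $m=3$, where a short substitution gives $h(3,n)=8n^2-17n+9=(n-1)(8n-9)>0$ for all $n\ge 2$. Combining, $h(m,n)\ge h(3,n)>0$, hence $12\,g(m,n)=(n-2)h(m,n)>0$ and therefore $W(C_{m,n})>W(D_{m,n})$, as claimed. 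The only genuine obstacle here is arithmetic bookkeeping: getting the degree-$6$ subtraction and the $(n-2)$ factorization exactly right. I would stress that this care is unavoidable, since the asymptotic dominance of the $6$th-order term does not by itself settle the small cases $m=3$ with $n=4,6,\dots$; it is precisely the explicit factorization together with the one-variable check at $m=3$ that closes the gap for every admissible pair.
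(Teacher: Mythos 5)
Your polynomial manipulations all check out: the difference $12\bigl(W(C_{m,n})\big|_{\beta=0}-W(D_{m,n})\bigr)$ is indeed the degree-six polynomial with leading term $2m^3n^3$; your factorization $(n-2)h(m,n)$ with $h(m,n)=2m^3n(n-2)-3m^2(2n^2-3n-1)+2m(2n^2-n+3)-4(n^2-4n+9)$ matches it coefficient by coefficient; the bound $\partial h/\partial m=6m\bigl[mn(n-2)-(2n^2-3n-1)\bigr]+2(2n^2-n+3)>0$ is valid for $m\ge 3$, $n\ge 4$ because $3n(n-2)-(2n^2-3n-1)=n^2-3n+1>0$; and $h(3,n)=8n^2-17n+9=(n-1)(8n-9)>0$. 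As a positivity certificate this is a genuinely different (and arguably tidier) route from the paper's, which instead re-expands the same degree-six difference in powers of $(m-3)$ and $(n-3)$ and observes that all sixteen coefficients are positive; both methods correctly handle the small cases that the degree-count alone does not settle.

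There is, however, a genuine gap at $m=3$. Theorem~\ref{thm:our_formula} is stated and proved only for $m\ge 4$ (part~7 of its proof uses the interior columns $(p,1),\dots,(p,n)$ with $2<p<m$, which require $m\ge 4$), while the theorem you are proving claims $m\ge 3$. So for $m=3$ you cannot simply ``work directly from the two closed forms'': the closed form for $W(C_{3,n})$ has not been established, and your base-point evaluation $h(3,n)$ sits exactly at the value of $m$ where the formula is unproven. The paper devotes the first half of its proof to precisely this point: it argues that when $m=3$ the distances computed in part~7 are still lower bounds on the true distances, so the expression of Theorem~\ref{thm:our_formula} with $\beta$ deleted remains a valid lower bound for $W(C_{3,n})$, and only then does the polynomial inequality (now with $\ge$ in place of $=$) finish the argument. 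Your proof is complete for $m\ge 4$; to cover $m=3$ you must either add this lower-bound observation or establish $W(C_{3,n})$ separately.
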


\begin{proof}
Observe that $3n-6>0$ if $n\ge 4$.
Moreover, part 7 of the proof of Theorem~{\ref{thm:our_formula}} is the
only one in which we assume $m>3$.
If $m>3$ then the distances considered there are the shortest ones, that is
as in the underlying graph, with a few  exceptions.
In these exceptions the distances are second shortest, i.e. increased by 2,
since the graph is bipartite.
In the same cases the distances are not shortest possible if $m=3$ and they
are not shortest even in some other cases.
Therefore for all $m\ge 3$ and even $n\ge 4$ the expression in
Theorem~{\ref{thm:our_formula}} without $\beta$ is a lower bound for
$W({C}_{m,n})$.
Hence
\begin{align}
12\Big(W&({C}_{m,n})-W({D}_{m,n})\Big)\ge
2m^3n^3-8m^3n^2-6m^2n^3+8m^3n+21m^2n^2+4mn^3\nonumber\\
&\qquad -15m^2n-10mn^2-4n^3-6m^2+10mn+24n^2-12m-68n+72\nonumber\\
=&2(m{-}3)^3(n{-}3)^3+10(m{-}3)^3(n{-}3)^2+12(m{-}3)^2(n{-}3)^3\nonumber\\
&\ \qquad+14(m{-}3)^3(n{-}3)+57(m{-}3)^2(n{-}3)^2+22(m{-}3)(n{-}3)^3\nonumber\\
&\qquad+6(m{-}3)^3+75(m{-}3)^2(n{-}3)+98(m{-}3)(n{-}3)^2+8(n{-}3)^3\nonumber\\
&+30(m{-}3)^2+130(m{-}3)(n{-}3)+39(n{-}3)^2+54(m{-}3)+61(n{-}3)+30>0,\nonumber
\end{align}
since $m,n\ge 3$ and all the coefficients are positive.
%\comment{This expresion should be wrong, the correct one should be:}
%$${\left(2 \, m^{3} n^{2} - 4 \, m^{3} n - 6 \, m^{2} n^{2} + 9 \, m^{2} n + 4 \, m n^{2} + 3 \, m^{2} - 2 \, m n - 4 \, n^{2} + 6 \, m + 16 \, n - 36\right)} {\left(n - 2\right)}$$
%\comment{Which should not be hard to be proven, but check it please I am tired now nad myabe I am wrong!}
\end{proof}

By Theorem~{\ref{thm:conj_formula}}, we have
$W({D}_{m,n})=W({D}_{n,m})$.
This is not the case of $W({C}_{m,n})$.
If both $m$ and $n$ are even and $m<n$, which of
$W({C}_{m,n})$ and $W({C}_{n,m})$ is bigger?
The next statement answers this question.

\begin{theorem}
\label{thm:compare}
If both $m$ and $n$ are even and $4\le m<n$ then
$W({C}_{m,n})>W({C}_{n,m})$.
\end{theorem}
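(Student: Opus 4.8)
The plan is to exploit the explicit formula of Theorem~\ref{thm:our_formula}. Since both $m$ and $n$ are even, the correction term $\beta$ vanishes, so one and the same polynomial, say $12W({C}_{m,n})=F(m,n)$, computes both quantities: we have $12W({C}_{m,n})=F(m,n)$ and $12W({C}_{n,m})=F(n,m)$, the latter obtained simply by interchanging the roles of $m$ and $n$. Thus the whole question reduces to determining the sign of $F(m,n)-F(n,m)$.

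Next I would subtract the two polynomials term by term. The degree-$6$ part $2m^3n^3$, the part $-3m^2n^2$, the part $-2mn$, and the constant $72$ are symmetric and cancel, and every surviving contribution turns out to be divisible by $(m-n)$. Collecting the quotient, I expect a factorization of the shape
\begin{align}
F(m,n)-F(n,m)=(m-n)\,g(m,n),\nonumber
\end{align}
where $g(m,n)=-2m^2n^2+4m^2n+4mn^2+4m^2+4n^2-mn-30m-30n+56$ is symmetric in $m$ and $n$ (as it must be, since the left-hand side is antisymmetric). Because $4\le m<n$ forces $m-n<0$, the inequality $W({C}_{m,n})>W({C}_{n,m})$ becomes equivalent to the single scalar inequality $g(m,n)<0$ on the range $m,n\ge 4$.

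It then remains to prove $-g(m,n)>0$. The genuine difficulty is that $-g$ carries several \emph{positive} lower-order terms (the $+4m^2+4n^2$ and the linear terms), so one cannot simply discard them; the negativity of $g$ must be extracted from the dominant term $2m^2n^2$, and one has to verify that its dominance persists all the way down to the boundary case $m=n=4$ rather than only asymptotically. I would resolve this exactly as in the proof of Theorem~\ref{thm:comparing}, via the shift $m=p+4$, $n=q+4$ with $p,q\ge 0$. A direct expansion should give
\begin{align}
-g(p+4,q+4)=2p^2q^2+12p^2q+12pq^2+12p^2+12q^2+65pq+66p+66q+72,\nonumber
\end{align}
all of whose coefficients are positive; hence $-g(m,n)\ge 72>0$ for all $m,n\ge 4$. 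Therefore $g(m,n)<0$, so $(m-n)g(m,n)>0$, i.e.\ $F(m,n)-F(n,m)>0$, which is precisely $W({C}_{m,n})>W({C}_{n,m})$.

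The only delicate point in carrying this out is the bookkeeping in the two displayed algebraic identities; both are routine but must be performed exactly. The symmetry of $g$ and the value $-g(4,4)=72$ serve as convenient consistency checks, and everything else is forced once the factor $(m-n)$ has been pulled out. In particular, note that evenness is used only to kill $\beta$; the core inequality $g(m,n)<0$ holds for all integers $m,n\ge 4$.
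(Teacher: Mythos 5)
Your proposal is correct and follows essentially the same route as the paper: both form $12\bigl(W(C_{m,n})-W(C_{n,m})\bigr)$ from Theorem~\ref{thm:our_formula} (with $\beta=0$ since $m,n$ are even), factor out $n-m$, and reduce to showing that the symmetric bracket $\Delta=-g$ is positive for $m,n\ge 4$; I checked that your $g$ agrees with the paper's bracket up to sign and that your shifted expansion $-g(p+4,q+4)=2p^2q^2+12p^2q+12pq^2+12p^2+12q^2+65pq+66p+66q+72$ is exact. The only (cosmetic) difference is the last step: the paper proves $\Delta>0$ by a case split ($m,n\ge 6$ by grouping terms, then $m=4$ directly), whereas you use the substitution $m=p+4$, $n=q+4$ and positivity of all coefficients, in the style of the paper's proof of Theorem~\ref{thm:comparing}.
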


\begin{proof}
By Theorem~{\ref{thm:our_formula}}
\begin{align}
12\Big(W&({C}_{m,n})-W({C}_{n,m})\Big)=
-2m^3n^2+2m^2n^3+4m^3n-4mn^3-5m^2n+5mn^2\nonumber\\
&\qquad+4m^3-4n^3-30m^2+30n^2+56m-56n\nonumber\\
&=(n{-}m)\big[2m^2n^2-4mn(m{+}n)-4m^2-4n^2+mn+30(m{+}n)-56\big].
\label{eq:rest}
\end{align}
Denote by $\Delta$ the long expression in brackets of (\ref{eq:rest}).
If $m,n\ge 6$ then
\begin{align}
m^2n^2-4m^2m-4m^2=m^2(n^2-4n-4)&>0\nonumber\\
m^2n^2-4mn^2-4n^2=n^2(m^2-4m-4)&>0\nonumber\\
mn+30(m+n)-56&>0,\nonumber
\end{align}
and so $\Delta>0$.
On the other hand if $m=4$ then
$$
\Delta=
32n^2-64n-16n^2-64-4n^2+4n+120+30n-56
=12n^2-30n>0
$$
as well.
Hence, if $m\ge 4$ then $\Delta>0$ and consequently
$W({C}_{m,n})>W({C}_{n,m})$.
\end{proof}

%
%
% ------------------------------------------------------------------------
%

\section{Concluding remarks and possible further work}

%-----slika mesh23---------
\hspace{0.5cm}

\begin{figure}[ht!]
\begin{center}
\begin{tikzpicture}[scale=1.2,style=thick]

\node [My Style, name=11]   at (1,1) {};
\node [My Style, name=12]   at (1,2) {};
\node [My Style, name=13]   at (1,3) {};

\node [My Style, name=21]   at (2,1) {};
\node [My Style, name=22]   at (2,2) {};
\node [My Style, name=23]   at (2,3) {};

\node [My Style, name=31]   at (3,1) {};
\node [My Style, name=32]   at (3,2) {};
\node [My Style, name=33]   at (3,3) {};

\node [My Style, name=41]   at (4,1) {};
\node [My Style, name=42]   at (4,2) {};
\node [My Style, name=43]   at (4,3) {};

%\draw[middlearrow={>},blue,line width=0.5mm] (y15) -- (x15);

\draw[middlearrow={>},blue,line width=0.5mm] (11) -> (12);
\draw[middlearrow={>},blue,line width=0.5mm] (12) -> (13);
\draw[middlearrow={>},blue,line width=0.5mm] (13) -- (23);
\draw[middlearrow={>},blue,line width=0.5mm] (23) -> (33);
\draw[middlearrow={>},blue,line width=0.5mm] (33) -> (43);
\draw[middlearrow={>}] (43) -- (42);
\draw[middlearrow={>},blue,line width=0.5mm] (42) -> (41);
\draw[middlearrow={>},blue,line width=0.5mm] (41) -> (31);
\draw[middlearrow={>},blue,line width=0.5mm] (31) -> (21);
\draw[middlearrow={>},blue,line width=0.5mm] (21) -- (11);

\draw[middlearrow={>}] (12) -> (22);
\draw[middlearrow={>},blue,line width=0.5mm] (22) -> (32);
\draw[middlearrow={>},blue,line width=0.5mm] (32) -> (42);

\draw[middlearrow={>}] (21) -- (22);
\draw[middlearrow={>}] (23) -> (22);
\draw[middlearrow={>}] (33) -- (32);
\draw[middlearrow={>}] (31) -> (32);

%%%%%%%%%%%%%%%%%%%%%%%%

\node [My Style, name=111]   at (6,1) {};
\node [My Style, name=121]   at (6,2) {};
\node [My Style, name=131]   at (6,3) {};

\node [My Style, name=211]   at (7,1) {};
\node [My Style, name=221]   at (7,2) {};
\node [My Style, name=231]   at (7,3) {};

\node [My Style, name=311]   at (8,1) {};
\node [My Style, name=321]   at (8,2) {};
\node [My Style, name=331]   at (8,3) {};

\node [My Style, name=411]   at (9,1) {};
\node [My Style, name=421]   at (9,2) {};
\node [My Style, name=431]   at (9,3) {};

\node [My Style, name=511]   at (10,1) {};
\node [My Style, name=521]   at (10,2) {};
\node [My Style, name=531]   at (10,3) {};

%\draw[middlearrow={>},blue,line width=0.5mm] (y15) -- (x15);

\draw[middlearrow={>},blue,line width=0.5mm] (111) -> (121);
\draw[middlearrow={>},blue,line width=0.5mm] (121) -> (131);
\draw[middlearrow={>},blue,line width=0.5mm] (131) -- (231);
\draw[middlearrow={>},blue,line width=0.5mm] (231) -> (331);
\draw[middlearrow={>},blue,line width=0.5mm] (331) -> (431);
\draw[middlearrow={>},blue,line width=0.5mm] (431) -> (421);
\draw[middlearrow={>},blue,line width=0.5mm] (531) -- (521);
\draw[middlearrow={>},blue,line width=0.5mm] (521) -> (511);
\draw[middlearrow={>},blue,line width=0.5mm] (511) -> (411);
\draw[middlearrow={>},blue,line width=0.5mm] (411) -> (311);
\draw[middlearrow={>},blue,line width=0.5mm] (311) -- (321);
\draw[middlearrow={>},blue,line width=0.5mm] (321) -- (221);
\draw[middlearrow={>},blue,line width=0.5mm] (221) -- (211);
\draw[middlearrow={>},blue,line width=0.5mm] (211) -- (111);

\draw[middlearrow={>}] (431) -> (531);
\draw[middlearrow={>}] (421) -> (521);
\draw[middlearrow={>}] (421) -> (411);
\draw[middlearrow={>}] (421) -> (321);

\draw[middlearrow={>}] (331) -> (321);
\draw[middlearrow={>}] (231) -- (221);
\draw[middlearrow={>}] (121) -> (221);
\draw[middlearrow={>}] (211) -> (311);

\end{tikzpicture}
\end{center}
\caption{Grids $G_{3,4}$ and $G_{3,5}$ with optimal orientations
$M_{3,4}$ and $M_{3,5}$, respectively, Hamiltonian paths are thick.}

\label{grid34-35}
\end{figure}
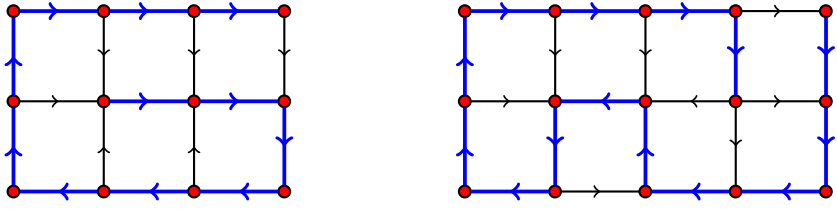

Let $C_q$ be a directed cycle on $q$ vertices.
Then $W(C_q)=q\binom q2=\frac 12 q^3+O(q^2)$.
It is known that if $G$ is a directed graph on $q$ vertices then
$W(G)\le W(C_q)$.
Thus, we have the following observation.

%

%-----slika 5x8 knor---------
\hspace{0.5cm}

\begin{figure}[ht!]
\begin{center}
\begin{tikzpicture}[scale=1.2,style=thick]

\node [My Style, name=11]   at (1,1) {};
\node [My Style, name=12]   at (1,2) {};
\node [My Style, name=13]   at (1,3) {};

\node [My Style, name=21]   at (2,1) {};
\node [My Style, name=22]   at (2,2) {};
\node [My Style, name=23]   at (2,3) {};

\node [My Style, name=31]   at (3,1) {};
\node [My Style, name=32]   at (3,2) {};
\node [My Style, name=33]   at (3,3) {};

\node [My Style, name=41]   at (4,1) {};
\node [My Style, name=42]   at (4,2) {};
\node [My Style, name=43]   at (4,3) {};

\node [My Style, name=51]   at (5,1) {};
\node [My Style, name=52]   at (5,2) {};
\node [My Style, name=53]   at (5,3) {};

\node [My Style, name=61]   at (6,1) {};
\node [My Style, name=62]   at (6,2) {};
\node [My Style, name=63]   at (6,3) {};

%\draw[middlearrow={>},blue,line width=0.7mm] (y15) -- (x15);

\draw[middlearrow={>}] (11) -> (12);
\draw[middlearrow={>}] (12) -> (13);
\draw[middlearrow={>}] (13) -- (23);
\draw[middlearrow={>}] (23) -> (33);

\draw[middlearrow={>}] (33) -> (43);
\draw[middlearrow={>}] (43) -- (53);
\draw[middlearrow={>}] (53) -> (63);

\draw[middlearrow={>}] (63) -> (62);
\draw[middlearrow={>}] (62) -> (61);
\draw[middlearrow={>}] (61) -- (51);
\draw[middlearrow={>}] (51) -> (41);
\draw[middlearrow={>}] (41) -> (31);

\draw[middlearrow={>}] (21) -> (31);
\draw[middlearrow={>}] (21) -- (11);

\draw[middlearrow={>}] (22) -> (21);
\draw[middlearrow={>}] (12) -- (22);
\draw[middlearrow={>}] (22) -> (21);
\draw[middlearrow={>}] (32) -- (22);
\draw[middlearrow={>}] (23) -- (22);

\draw[middlearrow={>}] (42) -> (32);
\draw[middlearrow={>}] (33) -- (32);
\draw[middlearrow={>}] (31) -> (32);
\draw[middlearrow={>}] (43) -- (42);
\draw[middlearrow={>}] (42) -- (41);

\draw[middlearrow={>}] (42) -- (52);
\draw[middlearrow={>}] (62) -- (52);

\draw[middlearrow={>}] (53) -- (52);
\draw[middlearrow={>}] (52) -- (51);

\end{tikzpicture}
\end{center}
\caption{Gird $G_{3,6}$ with the optimal orientation $M_{3,6}$.}

\label{grid3x6}
\end{figure}
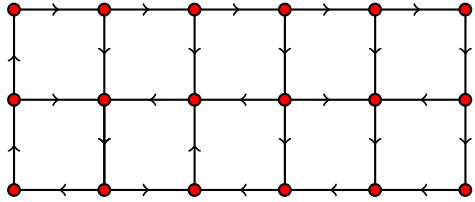

\begin{observation}
Among all orientations of $G_{m,n}$, where $m\ge 3$ and $n\ge 4$ is even,
$W({C}_{m,n})=\Theta(W(C_{mn}))$, i.e., $W(C_{m,n})$ has the best possible order.
\end{observation}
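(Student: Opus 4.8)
The plan is to squeeze $W(C_{m,n})$ between two positive constant multiples of $W(C_{mn})$, so that both have order $(mn)^3$. The upper bound is free: $C_{m,n}$ is an orientation of $G_{m,n}$, which has exactly $mn$ vertices, so the quoted fact that $W(D)\le W(C_q)$ for every digraph $D$ on $q$ vertices gives
$$
W(C_{m,n})\le W(C_{mn}),
$$
i.e.\ $W(C_{m,n})=O(W(C_{mn}))$ with constant $1$. It remains to produce a matching lower bound.

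For the lower bound I would compare leading terms. By Theorem~\ref{thm:our_formula} (and, for $m=3$, by the lower bound for $W(C_{m,n})$ established in the proof of Theorem~\ref{thm:comparing}), the dominant monomial of $W(C_{m,n})$ is $\tfrac{2}{12}m^3n^3=\tfrac16 m^3n^3$, every other term being of smaller degree in $(m,n)$. On the other hand,
$$
W(C_{mn})=mn\binom{mn}{2}=\tfrac12 m^3n^3-\tfrac12 m^2n^2,
$$
whose dominant monomial is $\tfrac12 m^3n^3$. Dividing both by $m^3n^3$ and letting $m,n\to\infty$ therefore gives
$$
\frac{W(C_{m,n})}{W(C_{mn})}\longrightarrow\frac{1/6}{1/2}=\frac13,
$$
so the ratio tends to a positive constant.

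To upgrade this to a bound valid for all admissible pairs, note that $r(m,n):=W(C_{m,n})/W(C_{mn})$ is a quotient of two polynomials of equal degree with positive leading coefficients; it is strictly positive on the whole range $m\ge 3$, $n\ge 4$ even, and tends to $1/3$ as $\max(m,n)\to\infty$. Hence there is a threshold $N$ with $r(m,n)\ge\tfrac16$ whenever $\max(m,n)\ge N$, while the finitely many remaining pairs each contribute a positive value with minimum $c_0>0$; taking $c_1=\min\{\tfrac16,c_0\}$ we obtain $c_1 W(C_{mn})\le W(C_{m,n})\le W(C_{mn})$, that is $W(C_{m,n})=\Theta(W(C_{mn}))$. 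The only point requiring care — and it is minor — is precisely this passage from the asymptotic ratio to a uniform lower bound over the whole admissible range; there is no genuine obstacle, since the hard computation of $W(C_{m,n})$ is already supplied by Theorem~\ref{thm:our_formula}, and the conclusion is essentially a one-line comparison of leading coefficients.
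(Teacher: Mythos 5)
Your argument is correct and is essentially the one the paper intends: the upper bound is the quoted fact $W(D)\le W(C_q)$ applied to the $mn$-vertex digraph $C_{m,n}$, and the lower bound comes from the leading term $\tfrac16 m^3n^3$ of the formula in Theorem~\ref{thm:our_formula} (with the remark in the proof of Theorem~\ref{thm:comparing} covering $m=3$), exactly as you do. One small slip: the ratio $W(C_{m,n})/W(C_{mn})$ tends to $\tfrac13$ only when both $m$ and $n$ grow --- for fixed $m$ it tends to $\tfrac16\bigl(1+\tfrac2m-\tfrac1{m^2}\bigr)\cdot 2$, as the paper itself notes --- but since this limit is always at least $\tfrac13$, your uniform positive lower bound on the ratio still goes through.
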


Observe that this is not the case of ${D}_{m,n}$
if $cn\le m\le n$ for a constant $c$.
Even if both $m$ and $n$ are odd, it is easy to find an orientation of the
grid in which the Wiener index has the correct order.
Just take a Hamiltonian path $H$ of the grid $G$, and construct $G_H$ as
decsribed in the Introduction.

%\begin{observation}
%For any orientation $O_{m,n}$ of $G_{m,n}$ that contains
%a directed Hamiltonian path without shortcuts,
%$W(O_{m,n})=\Theta(W(C_{mn}))$, i.e., $W(O_{m,n})$ has the best possible order.
%\end{observation}

By Theorem~{\ref{thm:our_formula}}, if $c_1n\le m\le c_2m$ where $c_1$ and
$c_2$ are constants, then for $q=mn$ we have
$W({C}_{m,n})=\frac 16 q^3+o(q^3)$.
Here the leading term has multiplier as the leading term for $W(P_q)$.
But if $m$ is a constant, we have a better bound.
In such a case $W({C}_{m,n})=\frac 16(1+\frac 2m-\frac1{m^2})q^3+O(q^2)$
and for $m=3$ the Wiener index is probably even higher.
Anyway, $C_{3,n}$ is not the orientation of $G_{3,n}$ with the biggest
Wiener index at least if $n\in\{4,6\}$.
The orientations $M_{3,n}$ of $G_{3,n}$, $4\le n\le 6$, with the biggest
Wiener index are in Figures~\ref{grid34-35} and~\ref{grid3x6}.
They were found by a computer and $W(M_{3,4})=578$, $W(M_{3,5})=1116$,
$W(M_{3,6})=1928$.
Just to compare let us mention that
$W(C_{3,4})=538$, $W(C_{3,6})=1740$, 
$W(D_{3,4})=516$, $W(D_{3,5})=968$,
$W(D_{3,6})=1626$. 
In $M_{3,4}$ and $M_{3,5}$, thick lines form a Hamiltonian path such that
all arcs not in this path are directed oppositely.
However, $M_{3,6}$ does not have such a path.
Although $W(M_{3,k})>W(C_{3,k})$ for $k\in\{4,6\}$, it can be true that
$\lim_{m,n\to\infty}W_{\max}(G_{m,n})/W(C_{m,n})=1$ for even $n$.
Hence, we have the following problem.

\begin{problem}
Find the biggest possible constant $c$, such that 
$W_{\max}(G_{m,n})\ge c(mn)^3+o\big((mn)^3\big)$.
\end{problem}

Of course, the main problem is the following one.

\begin{problem}
Find an orientation of $G_{m,n}$ with the maximum Wiener index.
\end{problem}

The above problem may be difficult.
The extremal graphs $M_{3,4}$, $M_{3,5}$ and $M_{3,6}$ do not have any
obvious simple property, but they are at least strongly connected.
Therefore, we conclude the paper with the following question.

\begin{question}
Let $M_{m,n}$ be an orientation of $G_{m,n}$ with the maximum Wiener index.
Is $M_{m,n}$ strongly connected?
\end{question}

\bigskip

\bigskip\noindent\textbf{Acknowledgments.}~~The first author aknowledges
partial support by Slovak research grants VEGA 1/0206/20, VEGA 1/0567/22,
APVV--17--0428, APVV--19--0308. Both authors acknowledge partial support of the Slovenian research agency
ARRS program\ P1-0383 and ARRS project J1-1692.

\end{document}